\newtheorem{theorem}{Theorem}[section]
\newtheorem{lemma}[theorem]{Lemma}
\newtheorem{question}{Question}
\newcommand{\ds}{\displaystyle}
\newenvironment{proof}[1][Proof]{\begin{trivlist}
\item[\hskip \labelsep {\bfseries #1}]}{\end{trivlist}}
\newcommand{\qed}{\hfill \ensuremath{\Box}}
\begin{document}

\thispagestyle{empty}
%\begin{titlepage}
\title{\textbf{Problems on the Triangular Lattice}}

\author{\textbf{Gaston A. Brouwer\footnote{gaston.brouwer@mga.edu} \hspace{20pt}Jonathan Joe\footnote{jonathan.joe@mga.edu}}\\[10pt]
{\textbf{Abby A. Noble\footnote{abby.noble@mga.edu} \hspace{20pt}Matt Noble\footnote{matthew.noble@mga.edu}}}\\[10pt]
Department of Mathematics and Statistics\\
Middle Georgia State University\\
Macon, GA 31206}

\date{}
\maketitle

\begin{abstract}
In this work, we consider a number of problems defined on the triangular lattice with $n$ rows, which we will denote as $T_n$.  Define a \textit{proper coloring} to be an assignment of colors to the points of $T_n$ such that no three points constituting the vertices of an equilateral triangle all receive the same color, and denote by $f(n)$ the smallest possible number of colors that can be used in a proper coloring of $T_n$.  We either determine exactly or give upper bounds for $f(n)$ for many small values of $n$, and it is shown that $\lim_{n\to\infty} \frac{f(n)}{n} \leq \frac13$.  We also give formulas counting the number of pairs of points in $T_n$ for which there are, respectively, 0, 1, or 2 choices of points in $T_n$ which extend those two into the vertices of an equilateral triangle.  Along the way, we pose a number of related questions.\\

\noindent \textbf{Keywords and phrases:} triangular lattice, hypergraph coloring, forbidding monochromatic equilateral triangles, enumerating equilateral triangles
\end{abstract}
%\end{titlepage}

\section{Introduction}

In March of 2019, the fourth author attended the SEICGTCCC in Boca Raton, Florida.  While there, he attended a presentation given by Braxton Carrigan of Southern Connecticut State University.  The talk was ostensibly about a scheduling problem and its graph theoretic solution, but during his discussion, the speaker offhandedly posed the following problem, which we will designate as Question \ref{mainproblem}.

\begin{question} \label{mainproblem} What is the minimum number of colors needed to color the $n$-row triangular lattice such that no three points constituting the vertices of an equilateral triangle each receive the same color? 
\end{question}

Throughout this paper, we will denote by $T_n$ the triangular lattice with $n$ rows, and we will let $f(n)$ be the answer to Question \ref{mainproblem}.  In an effort to be abundantly clear, $T_4$ is drawn in Figure \ref{t4}.

\begin{figure}[h]%
\begin{center}
\includegraphics[scale=.8]{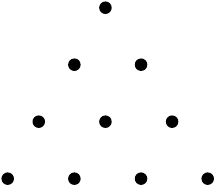}%
\caption{}
\label{t4}%
\end{center}
\end{figure}

Define a \textit{proper coloring} as an assignment of colors to the points of $T_n$ so that no three points that are the vertices of an equilateral triangle receive the same color.  In the parlance associated with this and related subject matter, such a coloring is often said to \textit{forbid} monochromatic equilateral triangles in $T_n$.  Note also that Question \ref{mainproblem} is asking for a coloring that forbids all equilateral triangles and not just those that have one edge being horizontal.

In later conversation, Braxton indicated to us that he had conceived of Question \ref{mainproblem} as a potential student research project.  We will admit to being surprised that it has not received a formal treatment in the literature, as the question feels quite natural.  There have been, however, a few informal mentions along with attacks on similar problems.  In an REU \cite{lyall} conducted by Neil Lyall and Akos Magyar in 2005, Question \ref{mainproblem} is also proposed as a potential topic for student research.  As well, Lyall and Magyar offer a variant of Question \ref{mainproblem} where one is asked to find the minimum number of colors needed in a coloring of $T_n$ that specifically forbids monochromatic equilateral triangles whose bottom side is horizontal.  Unfortunately, it seems that no results were obtained, or at least, none found their way to the literature.  When $n = 4$,  Question \ref{mainproblem} has popped up in several online problem-solving forums, and from the discussion there, it appears as if this specific case has been posed in undergraduate problem-solving competitions.  Indeed, $n=4$ is the first non-trivial case, and we invite readers to take a moment to verify for themselves that $f(4) = 3$.

Forbidden substructure questions are of course ubiquitous in both geometry and combinatorics, and Ramsey-type problems on the integer lattice are well-studied.  The most fundamental result is the theorem independently proven by Gallai and Witt \cite{witt} (the development of which is detailed in \cite{ramsey}) showing that for any finite $S \subset \mathbb{Z}^2$ and any coloring of $\mathbb{Z}^2$ using finitely many colors, there exists a monochromatic subset $S'$ of $\mathbb{Z}^2$ which is homothetic to $S$. Indeed, typical problems in this subject matter begin with a suitable $S$, and then ask for the minimum $n$ such that a coloring of an $n$ by $n$ sub-grid of $\mathbb{Z}^2$ using $k$ colors is guaranteed to have a monochromatic subset homothetic to $S$.  As a sample of notable results, consider results by Graham and Solymosi \cite{solymosi}, by Fenner, et al. \cite{fenner}, and by Axenovich and Manske \cite{axenovich}.

Our study of Question \ref{mainproblem} will be a slight departure from the way these questions are typically analyzed in several ways.  As mentioned above, in $\mathbb{Z}^2$, for a given $S$, colorings are usually sought that forbid monochromatic sets $S'$ where $S'$ is a translation and/or dilation of $S$.  Here, in our coloring of the $n$-row triangular lattice, we are also looking to forbid all rotations of an equilateral triangle.  This difference greatly increases the number of equilateral triangles that can be found in $T_n$, asymptotically by a factor of $\frac{n}{4}$.  Letting $\alpha(n)$ and $\beta(n)$ be the number of $3$-element subsets of $T_n$ that are, respectively, the vertices of an equilateral triangle, or the vertices of an equilateral triangle having its bottom side horizontal, we have $\alpha(n) = \frac{n^4 + 2n^3 - n^2 - 2n}{24}$ (OEIS sequence A000332) and $\beta(n) = \frac{n^3 - n}{6}$ (OEIS sequence A000292).  Also typical with this subject matter is that results are usually sought concerning lower bounds for functions like our $f(n)$.  In Section 2, we will instead give upper bounds for $f(n)$ for small values of $n$, along with exactly determining $f(n)$ for $n \leq 8$.  We also show that $\lim_{n\to\infty} \frac{f(n)}{n} \leq \frac13$.

In Sections 3 and 4, we will consider two additional problems that organically surfaced while we were investigating Question \ref{mainproblem}.  An immediate observation is that for a given pair of points $p_1$ and $p_2$ of $T_n$, there are either zero, one, or two possible equilateral triangles which have $p_1$ and $p_2$ as two of their vertices and their third vertex also being a point of $T_n$.  With this in mind, we define $a_0(n)$, $a_1(n)$, and $a_2(n)$ to be the number of pairs of points in $T_n$ that are vertices of zero, one, or two equilateral triangles in $T_n$.  In Section 3, we derive explicit formulas for $a_0(n)$, $a_1(n)$, and $a_2(n)$, with the surprising result being that $a_0(n) = a_2(n)$ for all $n$.  In Section 4, we put forth an extension of a Steiner triple system, a common structure in design theory corresponding to a decomposition of the complete graph $K_k$ into copies of $K_3$.  In our modified version, given some $k, r$, we ask for the existence of a multigraph $G$ on $k$ vertices where $r$ pairs of vertices are not adjacent and $r$ pairs of vertices are joined by a double edge, and $G$ can be decomposed into triples.  The triangular lattice $T_n$ is an example of such a structure with $k = \frac{n(n+1)}{2}$ and $r = a_0(n) = a_2(n)$.

\section{A Coloring Problem}

We begin this section by exactly determining $f(n)$ for $n \leq 8$.  It is trivial to see that $f(1) = 1$ and that $f(2) = f(3) = 2$.  As mentioned in the previous section, the determination of $f(4)$ can be found in informal online discussion, and for the sake of completion, we include proof here as well.

\begin{theorem} $f(4) = 3$
\end{theorem}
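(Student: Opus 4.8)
The plan is to establish $f(4)\le 3$ and $f(4)\ge 3$ separately. First I would fix notation for the ten points by rows: the apex $A$; the second row $B,C$; the third row $D,E,F$; and the bottom row $G,H,I,J$ (left to right). A direct inventory identifies all $\alpha(4)=15$ equilateral triangles: the ten with horizontal base (six of side $1$, three of side $2$, and the full triangle $GJA$), the three ``downward'' unit triangles $DEH$, $EFI$, $BCE$, and the two tilted triangles of side $\sqrt{3}$, namely $HFB$ and $IDC$. For the upper bound I would simply exhibit a proper $3$-coloring and verify the fifteen triangles directly: assigning color $1$ to $D,F,G,I$, color $2$ to $A,B,H,J$, and color $3$ to $C,E$ leaves every triangle bichromatic, so $f(4)\le 3$. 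This step is routine.

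The substance is the lower bound, that no proper $2$-coloring exists. Suppose for contradiction the points are properly colored red/blue. By color-swap symmetry I may assume the central point $E$ is red. The six points $H,D,B,C,F,I$ adjacent to $E$ form, in angular order, a $6$-cycle, and each of the six cyclically consecutive pairs spans a unit triangle with $E$ (these are exactly $DEH$, $DEB$, $BCE$, $EFC$, $EFI$, $HIE$). Since $E$ is red, no two cyclically adjacent hexagon points can both be red, so the red points of the hexagon form an independent set in $C_6$. The crucial observation is that the two tilted triangles $HFB$ and $IDC$ are precisely the two alternating triples $\{H,B,F\}$ and $\{D,C,I\}$ of this hexagon. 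Forbidding them from being monochromatic excludes the all-red alternating triples (already ruled out by independence) and the all-blue ones, which forces the red set to meet both alternating triples; together with independence this pins the red hexagon points to exactly one of its three long diagonals $\{H,C\}$, $\{B,I\}$, $\{D,F\}$.

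These three diagonals are permuted transitively by the rotational symmetry of $T_4$, so I may assume the red hexagon points are $H$ and $C$, with $B,D,F,I$ blue. Then the side-$2$ triangle $GIB$ has $I,B$ blue and forces $G$ red; the unit triangle $IJF$ has $I,F$ blue and forces $J$ red; but then the side-$2$ triangle $HJC$ has $H,J,C$ all red, a monochromatic triangle. This contradiction establishes $f(4)\ge 3$, and combined with the coloring above yields $f(4)=3$.

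The main obstacle is the bookkeeping needed to collapse the naive $2^{10}$ case analysis into something manageable, and the entire leverage comes from two structural coincidences: the triangles incident to $E$ wrap its neighbors into a single $6$-cycle, and the two rotated triangles coincide exactly with that hexagon's alternating triples. Once these are recognized, the $D_3$ symmetry reduces the whole problem to a single one-line contradiction, so essentially no computation remains.
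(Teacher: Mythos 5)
Your proof is correct and follows essentially the same route as the paper: center the argument on the middle point, note that its six neighbors form a $6$-cycle whose red vertices must be independent and whose two alternating triples are themselves equilateral triangles, reduce by symmetry to a long diagonal of red points, and derive a contradiction one step later. The only slip is the parenthetical claim that an all-red alternating triple is ``already ruled out by independence'' --- the alternating triples \emph{are} independent sets in $C_6$, so that exclusion actually comes from the forbidden triangles $HFB$ and $IDC$ (or from the requirement that the red set meet \emph{both} triples); this does not affect the validity of the argument.
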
    

\begin{proof} It is obvious that $T_4$ can be properly colored with three colors.  Assume to the contrary that $f(4) < 3$, and that $T_4$ has been properly colored with colors red and blue.  In Figure \ref{4rowsproof}, suppose point $x$ has been colored red, and consider the points labeled $v_0, \ldots, v_5$.\\  

\begin{figure}[h]%
\begin{center}
\includegraphics[scale=.9]{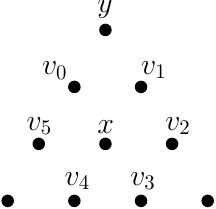}%
\caption{}
\label{4rowsproof}%
\end{center}
\end{figure}

For any $v_i, v_{i+1}$, we cannot have both being red, and as well, we cannot have points $v_0, v_2, v_4$ or $v_1, v_3, v_5$ each being blue.  We conclude that exactly two of $v_0, \ldots, v_5$ are colored red, and without loss of generality, these two red vertices can be taken to be $v_2$ and $v_5$.  Vertex $y$ can then be colored with neither red nor blue.\qed
\end{proof}

In attempting to find proper colorings of $T_n$ for larger $n$, it is helpful to think of $T_n$ as a $3$-uniform hypergraph.  Three vertices induce an edge in the hypergraph if they exactly correspond to the vertices of an equilateral triangle.  Indeed, the second author was able to implement a hypergraph coloring program written in SAGE to find proper $3$-colorings for all $T_n$ with $5 \leq n \leq 8$.  The coloring for $T_8$ is given in the following Figure \ref{8coloring}.\\

\begin{figure}[h]%
\begin{center}
\includegraphics[scale=.8]{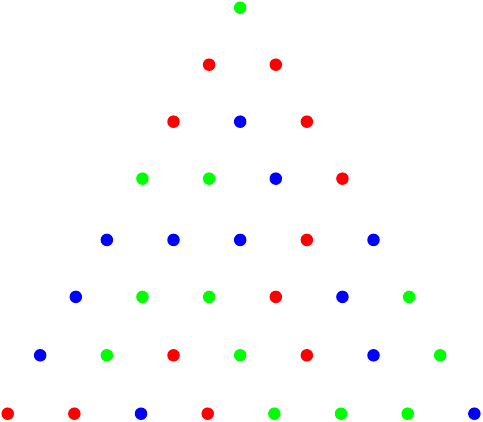}%
\caption{}
\label{8coloring}%
\end{center}
\end{figure}

We strongly suspect that $f(9) = 4$.  Our program did not find a proper $3$-coloring of $T_9$, however we were unable to prove that such a coloring does not exist.  We will make this Question \ref{n=9problem}.

 \begin{question} \label{n=9problem} Is it true that $T_9$ has no proper $3$-coloring? 
\end{question}

Upper bounds were found for other small values of $n$.  They are given below.

\begin{center}

\renewcommand{\arraystretch}{1.5}

%\resizebox{5.3in}{!} {
\begin{tabular}{|p{1.9cm}|p{1.9cm}|p{1.9cm}|p{1.9cm}|}

\hline

$f(9) \leq 4$			& 	$f(12) \leq 5$	&  $f(15) \leq 5$    & $f(18) \leq 7$	 \\ \hline

$f(10) \leq 4$			& 	$f(13) \leq 5$	&  $f(16) \leq 6$    & $f(19) \leq 7$	 \\ \hline

$f(11) \leq 4$			& 	$f(14) \leq 5$	&  $f(17) \leq 6$    & $f(20) \leq 7$	 \\ \hline

\end{tabular}
%}
\end{center}

We will now make some analysis of the asymptotic behavior of $f(n)$, where we begin by noting that $\lim_{n\to\infty} f(n) = \infty$.  Recall van der Waerden's theorem \cite{waerden}, which asserts that for any $r,k$, there exists a corresponding $N$ such that any coloring of the integers $1, \ldots, N$ with $r$ different colors is guaranteed to contain a monochromatic arithmetic progression of length $k$.  The minimum such $N$ is typically denoted as the \textit{van der Waerden number} $W(r,k)$.

\begin{theorem} \label{lowerbound} The limit $\ds \lim_{n\to\infty} f(n)$ diverges to $\infty$.
\end{theorem}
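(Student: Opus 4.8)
The plan is to show that $f$ is \emph{unbounded}; since $T_n$ sits inside $T_{n+1}$ as an induced sub-hypergraph (being equilateral is an intrinsic property, so the triangles among the points of a $T_n$-copy are exactly its induced edges), any proper coloring of $T_{n+1}$ restricts to a proper coloring of $T_n$. Hence $f(n) \le f(n+1)$, and a non-decreasing unbounded integer sequence already diverges to $\infty$. Thus it suffices to prove that for every fixed $r$ there is an $n$ with $f(n) > r$, i.e.\ that every $r$-coloring of some $T_n$ admits a monochromatic equilateral triangle.

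First I would coordinatize. I would write each point of the lattice as $a e_1 + b e_2$ with $e_1 = (1,0)$ and $e_2 = (\tfrac12, \tfrac{\sqrt3}{2})$, so that $T_n$ becomes the triangular region $\{(a,b) \in \mathbb{Z}^2 : a,b \ge 0,\ a+b \le n-1\}$. A one-line length computation then shows that for any $d \ge 1$ the three points $(a,b)$, $(a+d,b)$, $(a,b+d)$ are the vertices of an equilateral triangle of side $d$ (with horizontal base): the three difference vectors are $d e_1$, $d e_2$, and $d(e_2-e_1)$, each of length $d$. In other words, the ``upward'' equilateral triangles are precisely the homothetic copies of the corner configuration $S = \{(0,0),(1,0),(0,1)\}$. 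Consequently it is enough to forbid only these: if every sufficiently large triangular region is forced to contain a monochromatic homothet of $S$ under any $r$-coloring, we are done, and we never need to invoke the rotated triangles at all.

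Next I would supply the Ramsey-theoretic input. The recalled van der Waerden theorem is the one-dimensional instance; what is needed is its two-dimensional strengthening, which is exactly the Gallai--Witt theorem quoted in the introduction: any finite coloring of $\mathbb{Z}^2$ contains a monochromatic homothet of $S$. To convert this into a statement about the finite regions, I would argue by contradiction: if $f(n) \le r$ for all $n$, then every $T_n$ has a proper $r$-coloring, and a standard compactness argument (K\"onig's lemma on the tree of partial colorings of the nested regions) produces a proper $r$-coloring of the entire lattice $\mathbb{Z}^2$ with no monochromatic homothet of $S$, contradicting Gallai--Witt. Equivalently, and avoiding compactness, one may cite the finite form (the multidimensional van der Waerden theorem): there is $N=N(r)$ so that every $r$-coloring of $\{0,\dots,N\}^2$ contains a monochromatic corner. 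Either way I would finish with the embedding check: the box $\{0,\dots,N\}^2$ satisfies $a+b \le 2N$, hence lies inside $T_n$ as soon as $n \ge 2N+1$, so the guaranteed monochromatic corner is an honest equilateral triangle of $T_n$ and $f(2N+1) > r$.

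The main obstacle is the passage from one dimension to two. Van der Waerden's theorem as recalled yields only monochromatic three-term progressions along a line, and three collinear points never form a triangle, so a single application of the one-dimensional result cannot suffice. The heart of the matter is producing a monochromatic \emph{corner}, which genuinely requires the planar (Gallai--Witt / Hales--Jewett) strengthening. A secondary, more routine point is the logical bookkeeping: either the compactness reduction from the infinite lattice back to the finite $T_n$, or equivalently the use of the finite form of the theorem together with the explicit verification that the resulting configuration fits inside the triangular (rather than square) region.
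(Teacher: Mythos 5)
Your proof is correct, but it takes a genuinely different route from the paper's. The paper argues by minimal counterexample using only the one-dimensional van der Waerden theorem: supposing $f(n)\le r$ for all $n$ with $r$ minimal and $f(n_0)=r$, it takes $n=W(r,n_0+1)$, finds a monochromatic arithmetic progression $P$ of length $n_0+1$ in the bottom row of $T_n$ in some color $c$, and observes that the set of points completing an equilateral triangle with two points of $P$ is a (dilated) copy of $T_{n_0}$ that must avoid $c$ entirely; since that copy needs $r$ colors by the choice of $n_0$, the whole coloring needs $r+1$, a contradiction. This is exactly the bootstrap from progressions to corners that you flagged as ``the heart of the matter'': the triangular geometry converts a one-dimensional monochromatic progression into a forbidden color on a full two-dimensional subcopy, so the planar Gallai--Witt theorem is never needed. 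Your argument instead invokes Gallai--Witt (or the finite multidimensional van der Waerden theorem) as a black box to produce a monochromatic upward homothet of the corner $\{(0,0),(1,0),(0,1)\}$ directly; this is a heavier input but yields a shorter proof, needs only the upward triangles and none of the rotated ones, and has the merit of making explicit the monotonicity $f(n)\le f(n+1)$ required to upgrade ``$f$ is unbounded'' to ``the limit diverges,'' a point the paper leaves implicit. Both arguments are valid and neither gives meaningful quantitative bounds.
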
 

\begin{proof} Assume to the contrary that for some integer $r$, $f(n) \leq r$ for all $n$.  Furthermore, assume that $r$ is minimum with respect to this property and that $n_0$ is the smallest positive integer such that $f(n_0) = r$.  Consider $T_n$ where $n$ is equal to the van der Waerden number $W(r, n_0 + 1)$, and suppose $T_n$ has been properly colored with $r$ colors.  In the bottom row of this $T_n$, let $P = \{p_1, \ldots, p_{n_0+1}\}$ be the points of a monochromatic arithmetic progression, say, colored using color $c$.  Let $M$ be the set of all points of $T_n \setminus P$ which could be used to form the vertices of an equilateral triangle using two points of $P$.  Note that $M$ is isomorphic to $T_{n_0}$, and thus requires at least $r$ colors in a proper coloring, none of which can be color $c$.  It then follows that $f(n) \geq r + 1$.\qed
\end{proof}

We will specifically consider $\lim_{n\to\infty} \frac{f(n)}{n}$.  First, we note that it is easy to see that $\lim_{n\to\infty} \frac{f(n)}{n} \leq \frac12$, as evidenced by the following coloring.  One color class consists of the points in the middle column of $T_n$.  Other color classes consist of points lying on pairs of lines which make a $60^{\circ}$ angle with the horizontal and which intersect in a point in the middle column.  As an example, such a coloring is done for $T_7$ in Figure \ref{7rowscolored}.  Note that this gives $f(n) \leq \lfloor \frac{n}{2} \rfloor + 1$.\\

\begin{figure}[h]%
\begin{center}
\includegraphics[scale=.9]{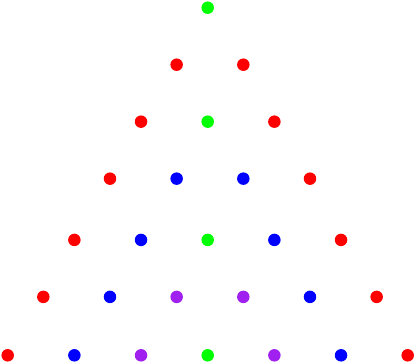}%
\caption{}
\label{7rowscolored}%
\end{center}
\end{figure}

We will extend this coloring scheme to ultimately show that $\lim_{n\to\infty} \frac{f(n)}{n} \leq \frac13$, and as a device, we will use an infinitely long stripe of the triangular lattice which has $n$ rows.  Denote this structure $S_n$, where for reference, $S_4$ is given in Figure \ref{s4}.\\

\begin{figure}[h]
\begin{center}
\includegraphics[scale=.6]{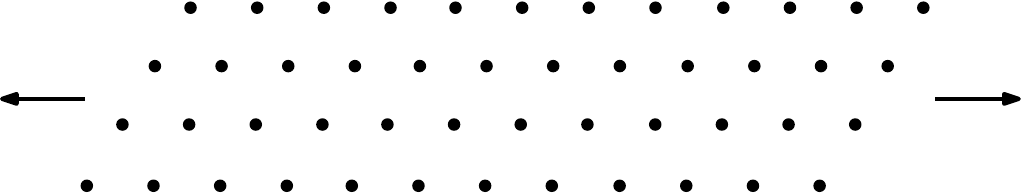}%
\caption{}
\label{s4}%
\end{center}
\end{figure}

Denote by $g(n)$ the minimum colors that can be used in a coloring of $S_n$ that forbids monochromatic equilateral triangles.  Clearly, $g(n) \leq 2f(n)$, as the points of $S_n$ can be partitioned into alternating copies of $T_n$ (which could then be properly colored with colors $c_1, \ldots, c_{f(n)}$) and inverted copies of $T_n$ (which could then be properly colored with colors $d_1, \ldots, d_{f(n)}$).  Just as the case with $f(n)$, exactly pinning down $g(n)$ seems difficult for all but the smallest values of $n$.  In some instances, however, we were able to give useful upper bounds.  For example, a proper 4-coloring of $S_6$ was obtained.  This was done by creating the base block given in Figure \ref{s6baseblock}, which is then repeatedly translated to the right and left.\\

\begin{figure}[hbt!]
\begin{center}
\includegraphics[scale=.8]{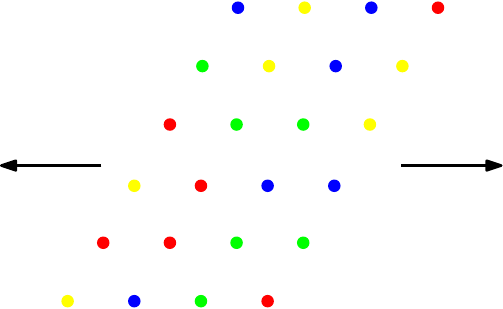}%
\caption{}
\label{s6baseblock}%
\end{center}
\end{figure}

To see that this coloring scheme does indeed yield a proper coloring of $S_6$, assume that a unit length is the minimum distance between pairs of points in $S_n$, and then note that the largest equilateral triangle that could possibly be formed with vertices in $S_6$ has edge length 5.  It follows that, if a monochromatic equilateral triangle were to exist in this coloring of $S_6$, it would have each of its three vertices lying in a window of three copies of the original base block (illustrated in Figure \ref{s6repeats}).  The authors used a computer search to discount the existence of a monochromatic equilateral triangle in this coloring, but this is not wholly unreasonable to verify by hand.\\   

\begin{figure}[h]
\begin{center}
\includegraphics[scale=.8]{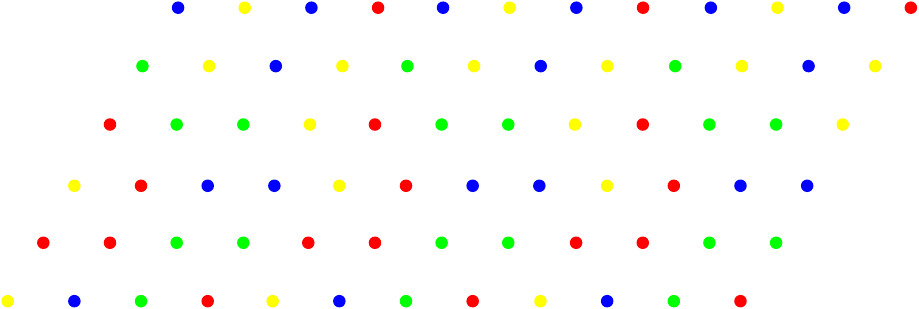}%
\caption{}
\label{s6repeats}%
\end{center}
\end{figure}

Our plan is to now create a new coloring scheme for $T_n$ by modifying the scheme referenced above in Figure \ref{7rowscolored}.  Instead of letting the middle column of points constitute one color class, we will use $d$ colors for $d$ middle columns, with the value of $d$ to be described below.  Other color classes will be essentially replaced by copies of $S_6$, as is visualized in Figure \ref{diagram}.\\

\begin{figure}[h]
\begin{center}
\includegraphics[scale=.8]{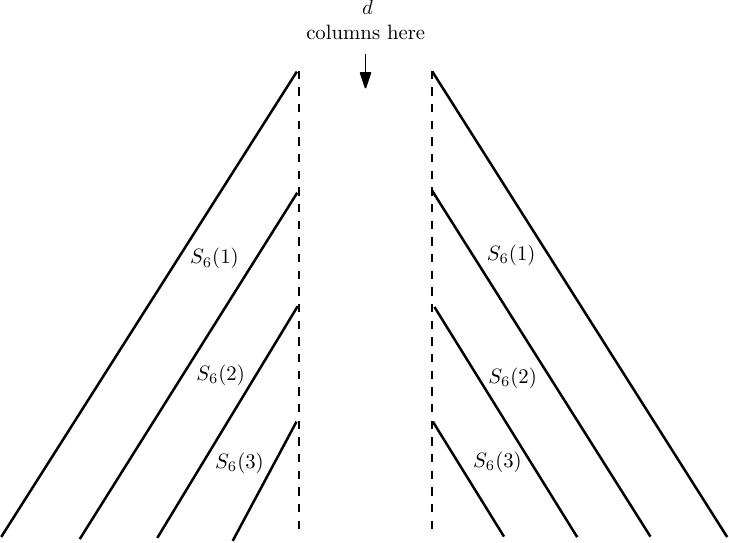}%
\caption{}
\label{diagram}%
\end{center}
\end{figure}

In Figure \ref{diagram}, $S_6(1)$ denotes a copy of $S_6$ properly colored with colors $\{c_{1,1}, c_{1,2}, c_{1,3}, c_{1,4}\}$, $S_6(2)$ denotes a copy of $S_6$ properly colored with colors  $\{c_{2,1}, c_{2,2}, c_{2,3}, c_{2,4}\}$, and so on.  To ensure that the coloring is proper, we need to select a large enough $d$ so that the two separate copies of $S_6(i)$ are far enough apart, guaranteeing that it is impossible to draw an equilateral triangle having two vertices in one copy and a third vertex in the other copy.  The existence of such $d$ is shown in Lemma \ref{existenceofd}, where it is important to note that, should we desire to color $T_n$ with some other $S_k$ in place of $S_6$, the corresponding $d$ could be chosen in terms of $k$ and independent of $n$.

\pagebreak

\begin{lemma} \label{existenceofd} Let $m$, $n$ be positive numbers with $m > \frac43n$. Let $R_1$ and $R_2$ be
infinite regions in the plane defined as follows. Region $R_1$ is bounded to
the right by the $y$-axis, above by the line $y = \sqrt{3}x$, and below by the line
$y = \sqrt{3}x - n$. Region $R_2$ is bounded to the left by $x = \frac{\sqrt{3}}{2}m$, above by the line $y = -\sqrt{3}(x-\frac{\sqrt{3}}2m)$, and below by the line $y = -\sqrt{3}(x-\frac{\sqrt{3}}2m)-n$. Then no equilateral triangle can be drawn with vertices in both $R_1$ and $R_2$.
\end{lemma}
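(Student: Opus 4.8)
The plan is to first put the claim into the form actually needed and then reduce it to a single case. Writing $u = y - \sqrt{3}x$ and $w = y + \sqrt{3}x$, the defining inequalities become $R_1 = \{x \le 0,\ -n \le u \le 0\}$ and $R_2 = \{x \ge \tfrac{\sqrt{3}}{2}m,\ \tfrac{3}{2}m - n \le w \le \tfrac{3}{2}m\}$; in particular $R_1 \subseteq \{x \le 0\}$ and $R_2 \subseteq \{x \ge \tfrac{\sqrt{3}}{2}m\}$, so the regions are separated by a vertical gap. Since any two points of the plane are already two vertices of an equilateral triangle, the statement cannot mean merely that one vertex lies in each region; I read it, as the application requires, as: no equilateral triangle has all three vertices in $R_1 \cup R_2$ with at least one vertex in each region. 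As there are only two regions, two of the three vertices must share a region, so by the reflection $x \mapsto \tfrac{\sqrt{3}}{2}m - x$ (which swaps $R_1$ and $R_2$ and carries equilateral triangles to equilateral triangles) it suffices to rule out a triangle with two vertices $A, B \in R_1$ and third vertex $C \in R_2$.

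Let $s$ be the side length and $\phi$ the direction of the base $AB$. First I would extract two coarse facts. Because $x_A, x_B \le 0 < \tfrac{\sqrt{3}}{2}m \le x_C$, the horizontal extent of the triangle is at least $\tfrac{\sqrt{3}}{2}m$, and since this extent is at most $s$ we obtain $s \ge \tfrac{\sqrt{3}}{2}m$. Because $A, B$ lie in a strip of $u$-width $n$, their displacement satisfies $2s|\sin(\phi - 60^\circ)| = |u_B - u_A| \le n$. Combining this with $s \ge \tfrac{\sqrt{3}}{2}m > \tfrac{2}{\sqrt{3}}n$ (using $m > \tfrac{4}{3}n$) forces $|\sin(\phi - 60^\circ)| < \tfrac{\sqrt{3}}{4}$, so $\phi$ lies well inside $(30^\circ, 90^\circ)$; in particular $\cos\phi > 0$ and, since $\phi + 60^\circ \in (90^\circ, 180^\circ)$, $\cos(\phi + 60^\circ) < 0$. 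The apex is $C = A + R_{\pm 60^\circ}(B - A)$. For the $+60^\circ$ choice, $x_C - x_A = s\cos(\phi + 60^\circ) < 0$ gives $x_C < 0$, contradicting $x_C \ge \tfrac{\sqrt{3}}{2}m$; hence $C = A + R_{-60^\circ}(B - A)$.

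The heart of the argument is to bound $w_C$ from above. A direct computation, using $\sin\psi + \sqrt{3}\cos\psi = 2\sin(\psi + 60^\circ)$, gives $w_C = u_A + 2\sqrt{3}\,x_A + 2s\sin\phi$. Now I feed in the remaining membership constraints: from $x_B = x_A + s\cos\phi \le 0$ I get $2\sqrt{3}\,x_A \le -2\sqrt{3}s\cos\phi$, and from keeping $B$ inside the strip, namely $u_B = u_A + 2s\sin(\phi - 60^\circ) \le 0$, I get $u_A \le -2s\sin(\phi - 60^\circ)$ when $\sin(\phi - 60^\circ) \ge 0$ (and simply $u_A \le 0$ otherwise). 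Substituting and using $\sin\phi - \sqrt{3}\cos\phi = 2\sin(\phi - 60^\circ)$, the terms collapse to $w_C \le 2s\sin(\phi - 60^\circ) \le n$. But $C \in R_2$ forces $w_C \ge \tfrac{3}{2}m - n$, and $\tfrac{3}{2}m - n > n$ is exactly the hypothesis $m > \tfrac{4}{3}n$. This contradiction finishes the case, and the reflection symmetry finishes the lemma.

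I expect the main obstacle to be bookkeeping rather than any deep idea: fixing the correct interpretation (the naive one-vertex-in-each-region reading is literally false), and then correctly identifying which of the several boundary inequalities are binding. The crucial subtlety is that fitting \emph{both} $A$ and $B$ into the width-$n$ strip costs more than $u_A \le 0$ — it forces $u_A \le -2s\sin(\phi - 60^\circ)$ — and it is precisely this sharper bound that collapses $w_C$ to $\le n$ and makes the threshold $m > \tfrac{4}{3}n$ tight. I would also verify the degenerate orientations ($\phi = 60^\circ$, and $A$ or $B$ lying on a bounding line) separately, to be sure each inequality is applied in the correct direction.
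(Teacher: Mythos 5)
Your argument is correct, and its skeleton matches the paper's: reduce by the reflection across $x=\frac{\sqrt{3}}{4}m$ to the case of two vertices in $R_1$ and one in $R_2$, split on the two possible positions of the apex, eliminate one by a sign/orientation argument, and show the other falls strictly below the line bounding $R_2$ from below, with $m>\frac43 n$ entering as exactly the same threshold (the paper needs $\frac32 m-2n>0$, you need $\frac32 m-n>n$). The execution, however, is genuinely different. The paper rotates the entire region $R_1$ clockwise by $60^\circ$ about $v_1$ and checks that a single extreme point of the image (the image of the origin) lies below the lower boundary of $R_2$; you instead work with the triangle itself in the skew coordinates $u=y-\sqrt{3}x$, $w=y+\sqrt{3}x$, derive $s\geq\frac{\sqrt{3}}{2}m$ and the resulting angle window for $\phi$, and bound $w_C$ directly using the fact that \emph{both} $A$ and $B$ must fit in the width-$n$ strip. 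Your route is more self-contained precisely where the paper is terse: the counterclockwise case, which the paper dismisses with a one-sentence parallel-lines remark, and the implicit claim that checking the single point $(x_T,y_T)$ rules out all of $R_1'\cap R_2$. What your version costs is the extra trigonometric bookkeeping (the $\sin(\phi-60^\circ)$ case split), and you should state explicitly the harmless normalization that $\phi$ is taken in $(30^\circ,90^\circ)$ rather than its antipodal range, i.e., fix the labeling of $A$ and $B$; with that said, all of your inequalities check out.
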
  

\begin{proof} Due to symmetry across the line $x=\frac{\sqrt3}{4}m$, it suffices to show that if an equilateral triangle has two vertices $v_1$ and $v_2$ in region $R_1$, its third vertex $v_3$ cannot be in region $R_2$. Let $v_1=(x_1,y_1)$ and $v_2=(x_2,y_2)$ be points in $R_1$. Without loss of generality we may assume that $y_1 \leq y_2$.

Given vertices $v_1$ and $v_2$, there are two possible locations for the third vertex $v_3$: we can rotate the point $(x_2,y_2)$  around $(x_1,y_1)$ by $60^\circ$ in a clockwise or a counterclockwise direction. If we rotate the region $R_1$ by $60^\circ$ in a counterclockwise direction, the resulting region will be bounded (in part) by two lines whose slope is $-\sqrt{3}$. These lines are parallel to the lines that bound $R_2$, and $v_3$ cannot in this case lie in $R_2$.

Now rotate the region $R_1$ by $60^\circ$ in a clockwise direction about $v_1$ and denote by $R'_1$ the image of $R_1$ under this transformation. Note that $v_3$ must lie in $R'_1$. We will show that $R'_1 \cap R_2$ is empty and, as a result, $v_3 \notin R_2$. The top rightmost point of $R'_1$, which we notate as $(x_T,y_T)$, can be found by rotating the point (0,0) $60^\circ$ clockwise around $(x_1,y_1)$:

$$
 \begin{bmatrix}
    x_T \\
    y_T
  \end{bmatrix}
=
  \begin{bmatrix}
    \frac12 & \frac{\sqrt{3}}{2} \\
     -\frac{\sqrt{3}}{2} & \frac12
  \end{bmatrix} 
 \begin{bmatrix}
    0 - x_1 \\
    0 - y_1
  \end{bmatrix}
    +
   \begin{bmatrix}
    x_1 \\
    y_1
  \end{bmatrix}
    =
    \begin{bmatrix}
    \frac12x_1 -\frac{\sqrt{3}}{2}y_1 \\
    \frac{\sqrt{3}}{2}x_1 + \frac12y_1
  \end{bmatrix} $$

\noindent Furthermore, the line $y=\sqrt{3}x$ gets rotated to the horizontal line $y=\frac{\sqrt{3}}{2}x_1+\frac12y_1$  and the line $y=\sqrt{3}x-n$ gets rotated to the line $y=\frac{\sqrt{3}}{2}x_1+\frac12y_1 - \frac12n$.

Consider the line $f(x)=-\sqrt{3}(x-\frac{\sqrt{3}}{2}m)-n$, which bounds $R_2$ from below. We will show that $(x_T,y_T) \notin R_2$ by showing that $f(x_T) > y_T$. 
\begin{eqnarray}
f(x_T) &=& -\sqrt{3}\left(x_T-\frac{\sqrt{3}}{2}m\right)-n \nonumber \\
 & =& -\sqrt{3}\left(\frac12x_1 -\frac{\sqrt{3}}{2}y_1-\frac{\sqrt{3}}{2}m\right)-n \nonumber \\
 & =& -\frac{\sqrt{3}}{2}x_1 +\frac{3}{2}y_1+\frac{3}{2}m - n \nonumber \\
 & =& \frac{\sqrt{3}}{2}x_1 + \frac12y_1-\sqrt{3}x_1 + y_1+\frac{3}{2}m - n \nonumber \\ 
 &=& y_T - \sqrt{3}x_1 + y_1+\frac{3}{2}m - n  \nonumber
\end{eqnarray}

\noindent Since the line $y=\sqrt{3}x - n$ bounds the region $R_1$ from below, it follows that $-\sqrt{3}x_1 \geq -y_1 - n$. This gives us the following inequality:

$$ f(x_T)\geq y_T -y_1 - n + y_1+\frac{3}{2}m - n = y_T + \frac{3}{2}m - 2n $$

\noindent By using the fact that $m > \frac43n$, we obtain the desired strict inequality $f(x_T) > y_T$.\qed
\end{proof}

A few additional comments should be made regarding the coloring scheme illustrated in Figure \ref{diagram}.  This construction shows that $\lim_{n\to\infty} \frac{f(n)}{n} \leq \frac{g(k)}{2k}$ for any $k \in \mathbb{Z}^+$.  We used $S_6$ (as opposed to some other $S_k$) because $\frac{g(6)}{6} \leq \frac23$, and that upper bound is the best we have been able to produce.  We have found no $k$ where we are able to prove that $\frac{g(k)}{k} < \frac23$, either by explicit coloring or some other argument.  Note also that, since $g(k) \leq 2f(k)$, any $k$ that is found where $\frac{f(k)}{k} < \frac13$ will also give us a better upper bound than the one we have at present.  Returning to the chart, given earlier in this section, listing upper bounds we have found for $f(n)$ for small $n$, one may notice $f(15) \leq 5$, which also implies $\lim_{n\to\infty} \frac{f(n)}{n} \leq \frac13$.  Again, we remark that this is the best we have been able to do, and we leave further improvements as an open question.

\begin{question} \label{limitproblem} Does there exist $k$ such that $\frac{f(k)}{k} < \frac13$?  In general, what is the value of the limit $\lim_{n\to\infty} \frac{f(n)}{n}$?
\end{question}

\vspace{50pt}

\section{A Counting Problem} 

All work done in this section will hinge on a straightforward observation, which is visualized in Figure \ref{pairsofpoints} for the case of $T_4$.  Given points $p_1$ and $p_2$ of $T_n$, there may be two possible options for a point $p_3$ in $T_n$ which allow $p_1, p_2, p_3$ to form the vertices of an equilateral triangle (see $p_1, p_2$ highlighted in red in Figure \ref{pairsofpoints}), or there may be one possible option for such a point $p_3$ (see $p_1, p_2$ highlighted in blue), or there may be no such point $p_3$ at all (see $p_1, p_2$ highlighted in green).\\

\begin{figure}[h]%
\begin{center}
\includegraphics[scale=.8]{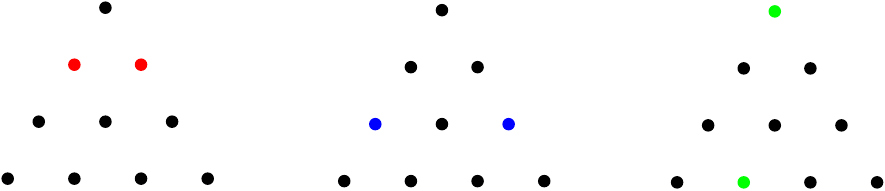}%
\caption{}
\label{pairsofpoints}%
\end{center}
\end{figure}

Let the respective functions $a_0(n)$, $a_1(n)$, and $a_2(n)$ count the number of pairs of points in $T_n$ that are vertices of zero, one, or two equilateral triangles in $T_n$.  As a quick warmup exercise, a reader may want to calculate for themselves that the ${10 \choose 2} = 45$ pairs of points of $T_4$ are sorted as $a_0(4) = 9$, $a_1(4)$ = 27, and $a_2(4) = 9$.  In what follows, we derive explicit formulas for $a_0(n)$, $a_1(n)$, and $a_2(n)$, with an interesting result being that $a_0(n) = a_2(n)$ for all $n$.

We will first determine $a_2(n)$.  Let $R_0$ be the rhombus drawn in Figure \ref{rhombusfigure}.\\  

\begin{figure}[h]%
\begin{center}
\includegraphics[scale=.8]{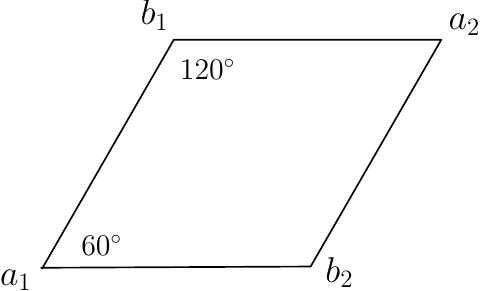}%
\caption{}
\label{rhombusfigure}%
\end{center}
\end{figure}

Let $\mathcal{R}$ consist of all rhombi similar to $R_0$.  Note that $a_2(n)$ is equivalent to the number of 4-element subsets of $T_n$ which constitute the vertices of some $R \in \mathcal{R}$, for the simple reason that pairs of points like those labeled $b_1, b_2$ in Figure \ref{rhombusfigure} exactly correspond to pairs counted by $a_2(n)$.  Define $R \in \mathcal{R}$ to be \textit{minimally contained} in some $T_k$ if $R$ can be drawn with its vertices being points of $T_k$, but $R$ cannot be so drawn in $T_{k-1}$.  Let the function $m(k)$ count the number of distinct $R \in \mathcal{R}$ that are minimally contained by $T_k$.  Our plan will now be to construct a specific formula for $m(k)$, and then for each $k \in \{3, \ldots, n\}$, multiply $m(k)$ by the number of times $T_k$ appears in $T_n$.  Taking the sum of all these individual products will give us the desired value $a_2(n)$.  Note here that we only need to do this for $k \in \{3, \ldots, n\}$ as $a_2(1)$ and $a_2(2)$ are both equal to 0.

We begin with a lemma describing how a minimally contained copy of $R \in \mathcal{R}$ must appear in some $T_k$.

\begin{lemma} \label{midpointtheorem} Let $R \in \mathcal{R}$ be minimally contained in $T_k$, and denote by $b_1, b_2$ the two vertices of the $120^{\circ}$ angles of $R$.  Then at least one of $b_1, b_2$ must be the midpoint of one of the sides of $T_k$.
\end{lemma}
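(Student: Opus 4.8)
The plan is to introduce coordinates that make both membership in $T_k$ and ``being a side-midpoint'' transparent, and then to reduce the many possible orientations of $R$ to a single case by symmetry. Identify the points of the lattice with the Eisenstein integers $p + q\omega$, where $\omega = e^{i\pi/3}$, so that Cartesian coordinates are $(p + q/2,\, q\sqrt{3}/2)$. For a fixed upward triangle $T_k$ I would assign to each point the triple of ``triangular'' (barycentric) coordinates $(a,b,c) = (q,\ (k-1)-p-q,\ p)$, which satisfy $a+b+c = k-1$ identically; the crucial features are that a point lies in $T_k$ exactly when $a,b,c \ge 0$, the three sides of $T_k$ are the lines $a=0$, $b=0$, $c=0$, and the midpoint of the side $b=0$ (when it is a lattice point) is the triple $(\tfrac{k-1}{2},0,\tfrac{k-1}{2})$, with analogous expressions on the other two sides.

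Next I would parametrize $R$ by one of its $120^\circ$ vertices $b_1 = p_0 + q_0\omega$ together with the short diagonal $z = b_2 - b_1 = m + n\omega$. Since the lattice is closed under multiplication by $\omega$, the two $60^\circ$ vertices are $a_1 = b_1 + z\omega$ and $a_2 = b_1 + z\bar\omega$, both lattice points; using $\omega^2 = \omega - 1$ and $\bar\omega = 1-\omega$ I would compute the triangular coordinates of all four vertices as explicit affine functions of $(p_0,q_0,m,n)$. The minimal enclosing upward triangle is then pinned down coordinate by coordinate: if $\alpha,\beta,\gamma$ denote the minima of the three coordinate-functions over the four vertices, then subtracting these minima gives the tightest enclosing upward triangle, so that $k_{\min}-1 = -(\alpha+\beta+\gamma)$, and the coordinates of $b_1$ and $b_2$ in $T_{k_{\min}}$ are obtained by the same shift.

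To control the orientation of $z$, I would exploit the symmetries of the configuration. A $120^\circ$ rotation about the centroid of $T_k$ preserves upward triangles and cyclically permutes $(a,b,c)$, the three reflections swap pairs of these coordinates, and the relabeling $z \mapsto -z$ merely interchanges $b_1$ and $b_2$; together these act transitively on the six $60^\circ$ sectors into which the lines $m=0$, $n=0$, $m+n=0$ divide the direction of $z$. Hence it suffices to treat a single representative sector, say $m,n \ge 0$. A short computation in that sector gives $k_{\min} = 2(m+n)+1$ --- in particular $k_{\min}$ is always odd, so the relevant midpoints are genuine lattice points --- and places $b_2$ at $(m+n,\,0,\,m+n)$, which is exactly the midpoint of the side $b=0$ of $T_{k_{\min}}$. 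Transporting back by the symmetry used shows that in every case one of $b_1,b_2$ is a side-midpoint, proving the lemma.

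I expect the main obstacle to be conceptual rather than computational: unlike the familiar axis-aligned picture, the short diagonal of $R$ may point along any lattice direction (any Eisenstein integer, not merely the three edge directions), so there are infinitely many orientations to account for. The symmetry reduction above is what tames this; the remaining care is bookkeeping --- verifying that the minimal enclosing triangle is indeed the one obtained by shifting away the coordinate minima (so that $T_{k_{\min}}$ genuinely cannot be shrunk), and that the group action really sends every sector into the chosen representative. Once these are in place, the identification of $b_2$ as a midpoint is immediate.
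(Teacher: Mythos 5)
Your proof is correct, but it goes down a genuinely different road than the paper's. The paper argues geometrically and by contradiction: it first notes that minimal containment forces (at least) three vertices of $R$ onto the boundary of $T_k$, places the $120^{\circ}$ vertex $b_1$ on the bottom edge, and then uses the observation that the two vectors from the \emph{midpoint} of that edge out to the other two sides (at a $120^{\circ}$ angle to each other) have equal length --- so if $b_1$ sits anywhere other than the midpoint, translating the configuration shows one of $a_1, a_2$ must spill outside $T_k$. Your argument instead coordinatizes everything: Eisenstein integers to parametrize $R$ by a base vertex and short diagonal $z = m + n\omega$ (your identity $a_1 = b_1 + z\omega$, $a_2 = b_1 + z\bar\omega$ is right), barycentric coordinates in which the minimal enclosing upward triangle is read off by subtracting coordinate minima, and a symmetry reduction to the sector $m,n \geq 0$; I checked the computation there and it does give $k_{\min} = 2(m+n)+1$ with $b_2$ landing at $(m+n,0,m+n)$, the midpoint of the side $b=0$. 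The paper's proof buys brevity and geometric intuition, at the cost of leaning on a figure and on the unexpanded sub-claim about three vertices lying on the edges. Your proof is heavier to write out but is fully mechanical once set up, and it delivers more: the parity fact that $k_{\min}$ is always odd (which is the content of the even case of Lemma \ref{mofncount}), the identification of exactly which of $b_1, b_2$ is the midpoint in each sector, and --- by counting the directions $z$ with $m+n = \frac{k-1}{2}$ over the six sectors modulo $z \mapsto -z$ --- essentially the full count $m(k) = \frac{3}{2}(k-1)$ as a byproduct. The only items you should be careful to write down explicitly are the transitivity of the symmetry group on the six sectors (including the boundary rays $m=0$, $n=0$, $m+n=0$) and the verification that the coordinate-minima triangle really is the minimal lattice $T_k$ containing $R$; both are routine.
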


\begin{proof} Consider Figure \ref{gastondiagram} where an arbitrary equilateral triangle has been drawn with midpoint $M$ on its bottom side and points $x,y$ placed so that the vectors $\overrightarrow{Mx}$ and $\overrightarrow{My}$ are perpendicular to the other two sides of the triangle.  These vectors are equal in length and the angle between them is $120^{\circ}$.  If we were to then fix point $M$, then rotate both $\overrightarrow{Mx}$ and $\overrightarrow{My}$ through some angle $\theta$ with $-30^{\circ} \leq \theta \leq 30^{\circ}$, and extend them so that they terminate at points $x'$ and $y'$ on the sides of the triangle, vectors $\overrightarrow{Mx'}$ and $\overrightarrow{My'}$ would also have the same length.

\begin{figure}[h]%
\begin{center}
\includegraphics[scale=.8]{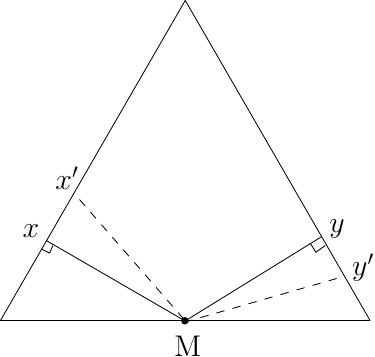}%
\caption{}
\label{gastondiagram}%
\end{center}
\end{figure}     

With the stipulation that $R \in \mathcal{R}$ is minimally contained in $T_k$, at least three of the four vertices of $R$ must be at points on the edges of $T_k$.  Without loss of generality, we may then assume that $b_1$ is placed on the bottom edge of $T_k$.  Note that, if $b_1$ were not placed at the midpoint of this edge, we could translate $a_1$, $b_1$, and $a_2$ to either the left or right to attain the configuration of points in Figure \ref{gastondiagram} with $a_1$, $b_1$, and $a_2$ taking place of $x'$, $M$, $y'$, respectively.  It then follows that, if $b_1$ was not originally placed at the midpoint of the bottom edge, we have the contradiction of either $a_1$ or $a_2$ lying outside of $T_k$.\qed
\end{proof}

\begin{lemma} \label{mofncount} If $k$ is odd, $m(k) = \frac32(k-1)$.  If $k$ is even, $m(k) = 0$. 
\end{lemma}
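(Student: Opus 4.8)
My plan is to split on the parity of $k$ and, for odd $k$, reduce the count to a short case analysis built on Lemma \ref{midpointtheorem} together with inclusion--exclusion over the three sides of $T_k$. The even case is immediate: each side of $T_k$ is subdivided into $k-1$ unit segments, so its midpoint is a lattice point precisely when $k-1$ is even, i.e.\ when $k$ is odd. Thus for even $k$ no side of $T_k$ has a lattice point at its midpoint, and Lemma \ref{midpointtheorem} then forbids any minimally contained $R \in \mathcal{R}$, giving $m(k) = 0$.

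For the odd case write $k = 2t+1$. First I would record the reformulation that an $R \subseteq T_k$ is minimally contained exactly when $R$ has a vertex on each of the three sides of $T_k$; indeed, if $R$ missed one side it would lie inside the size-$(k-1)$ sub-triangle obtained by deleting that side. Next I would introduce coordinates $(i,j)$ for the point $iu + jw$, where $u,w$ are unit lattice vectors meeting at $60^\circ$, so that $T_k = \{(i,j) : i,j \geq 0,\ i+j \leq 2t\}$ and the three side-midpoints are $(t,0)$, $(0,t)$, and $(t,t)$. Placing an obtuse vertex $b_1$ at the bottom midpoint $(t,0)$, the rhombus is then determined by the lattice vector $v = b_2 - b_1 = (p,q)$, with the two $60^\circ$ vertices obtained by rotating $v$ through $\pm 60^\circ$ about $b_1$; I would record these images explicitly, using that rotation by $60^\circ$ sends $(i,j) \mapsto (-j,\, i+j)$.

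The heart of the argument is to compute $N$, the number of minimally contained rhombi having an obtuse vertex at the bottom midpoint. Writing out the containment inequalities for all four vertices confines $v$ to $-t \leq p \leq 0$ and $-p \leq q \leq t$; imposing in addition that some vertex lands on the left side $i=0$ and some vertex lands on the right side $i+j=2t$ forces $q = t$, leaving exactly the $t+1$ admissible choices $p \in \{-t,\dots,0\}$, so $N = t+1$. By the dihedral symmetry of $T_k$ the same count holds at each of the three midpoints. A rhombus counted at two midpoints at once must have its two obtuse vertices at those midpoints, which pins it down uniquely; checking that this single rhombus does lie in $T_k$ and is minimally contained gives $P = 1$ for each of the three pairs, while no rhombus can meet all three midpoints since it has only two obtuse vertices. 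Inclusion--exclusion (with empty triple overlap) then yields $m(k) = 3N - 3P = 3(t+1) - 3 = 3t = \frac32(k-1)$.

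I expect the determination of $N$ to be the main obstacle: one must verify not merely that the listed rhombi are contained in $T_k$, but that the three-sides condition genuinely forces $q = t$ and eliminates every other vector $v$ in the admissible range, which is where the coordinate bookkeeping is most delicate. By contrast, the symmetry reduction and the overlap count $P = 1$ are comparatively routine once the coordinates and Lemma \ref{midpointtheorem} are in hand. Before committing to the range of $p$, I would sanity-check against the known value $m(3) = 3$ (here $t = 1$, so $N = 2$ and $P = 1$) to guard against an off-by-one error.
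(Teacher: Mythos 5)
Your proposal is correct, and it reaches the paper's count by a somewhat different accounting than the paper's own proof, though both rest on the same foundation (Lemma \ref{midpointtheorem} plus the threefold symmetry of $T_k$, with the even case handled identically). The paper partitions the minimally contained rhombi disjointly: the three rhombi with \emph{both} obtuse vertices at side-midpoints are set aside, and the remainder are counted as $3|S|$, where $S$ is parametrized synthetically by which point of the left edge serves as the acute vertex $a_1$, giving $\frac{k-3}{2}$ per midpoint and hence $\frac{3(k-3)}{2}+3$. You instead count \emph{all} minimally contained rhombi anchored at a single midpoint ($N = t+1$, which indeed equals the paper's $\frac{k-3}{2}$ plus the two ``double-midpoint'' rhombi through that point) and then correct for overcounting by inclusion--exclusion; your explicit lattice coordinates and the rotation $(i,j)\mapsto(-j,i+j)$ replace the paper's vector/translation argument. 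Your approach is more mechanical and easier to audit; the paper's is shorter but leans on the geometric assertion that the acute vertices must land on the two slanted sides. One point you rightly flag as delicate does check out, but deserves the explicit case split: the left-side condition gives $p=-t$ or $q=t$, the right-side condition gives $p+q=t$ or $q=t$, and the only way to avoid $q=t$ is $p=-t$ together with $p+q=t$, forcing $q=2t>t$, a contradiction with containment --- so $q=t$ is genuinely forced and $N=t+1$ stands. With that verification written out, your argument is complete and agrees with the paper's value $m(k)=\frac32(k-1)$ for odd $k$.
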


\begin{proof} Let $R \in \mathcal{R}$.  Lemma \ref{midpointtheorem} guarantees that if $R$ is minimally contained in $T_k$, at least one of the vertices $b_1$ and $b_2$ must be placed at the midpoint of some side of $T_k$.  If $k$ is even, the midpoint of each side of $T_k$ is not itself a point of $T_k$, so in this case, we have $m(k) = 0$.  Now assume $k$ is odd.  Note that there are three such $R$ where both of $b_1, b_2$ are placed at midpoints of sides.  We will now consider the set $S$ consisting of all $R$ having $b_1$ placed at the midpoint $M$ of the bottom side of $T_k$, vertices $a_1$ and $a_2$ being placed somewhere on the respective left and right sides of $T_k$, and $b_2$ placed somewhere in the interior of $T_k$, and once $|S|$ has been determined, multiply the result by three.

Starting from the bottom left corner of $T_k$ and continuing to the top corner, denote in order the points of the left side of $T_k$ as $p_1, \ldots, p_k$.  Note that the points $p_i \in \{p_1, \ldots, p_k\}$ where $p_i$ is a vertex of some $R \in S$ are exactly those with $i \in \{2, \ldots, \frac{k-1}{2}\}$.  It then follows that $|S| = \frac{k-3}{2}$, and in total, $m(k) = \frac{3(k-3)}{2} + 3 = \frac32(k-1)$.\qed
\end{proof}

We now develop an explicit formula for $a_2(n)$.  To do so, we will combine the previous two lemmas with well-known sums of the first $k$ positive integers, squares, and cubes, which for easy reference are given below.

\begin{center}
$\ds \sum_{j=1}^{k} j = \frac{k(k+1)}{2}$ \,\,\,\,\,\,\,\,\,\, $\ds \sum_{j=1}^{k} j^2 = \frac{k(k+1)(2k+1)}{6}$  \,\,\,\,\,\,\,\,\,\, $\ds \sum_{j=1}^{k} j^3 = \frac{k^2(k+1)^2}{4}$
\end{center}

\begin{theorem} \label{paircounttheorem} If $n$ is odd, $a_2(n) = \frac{(n-1)^2(n+1)(n+3)}{32}$.  If $n$ is even, $a_2(n) = \frac{n(n-2)(n+2)^2}{32}$. 
\end{theorem}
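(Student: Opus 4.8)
The plan is to realize $a_2(n)$ as a weighted sum over the sub-triangles of $T_n$, using the ``minimally contained'' framework already established. Recall that $a_2(n)$ equals the number of rhombi $R \in \mathcal{R}$ having all four vertices in $T_n$. I would first argue that each such rhombus is minimally contained in exactly one copy of $T_k$ sitting inside $T_n$: given $R$, the smallest lattice-aligned equilateral triangle (in the orientation of $T_n$) whose three supporting lines pass through vertices of $R$ is uniquely determined, and Lemma \ref{midpointtheorem} guarantees that this minimal circumscribing triangle is a genuine copy of some $T_k$. The assignment sending $R$ to its minimal circumscribing $T_k$ then gives a bijection between the rhombi in $T_n$ and the pairs consisting of a copy of $T_k$ in $T_n$ together with a rhombus minimally contained in it. This yields
$$ a_2(n) = \sum_{k=3}^{n} m(k)\, N(k,n), $$
where $N(k,n)$ denotes the number of copies of $T_k$ appearing in $T_n$, and the sum begins at $k=3$ because $a_2(1)=a_2(2)=0$.

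Next I would compute $N(k,n)$. A copy of $T_k$ is pinned down by the position of its bottom-left vertex, which may occupy a triangular sub-array of the available points; a direct count gives $N(k,n) = \binom{n-k+2}{2} = \frac{(n-k+1)(n-k+2)}{2}$. As sanity checks, $N(1,n) = \binom{n+1}{2}$ recovers the number of points of $T_n$, $N(n,n)=1$, and $N(3,4)=3$ (which, multiplied by $m(3)=3$, reproduces $a_2(4)=9$). Invoking Lemma \ref{mofncount}, every term with $k$ even vanishes, so only odd $k$ survive, each contributing $m(k) = \frac32(k-1)$.

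It then remains to evaluate
$$ a_2(n) = \sum_{\substack{k=3 \\ k \text{ odd}}}^{n} \frac32(k-1)\cdot \frac{(n-k+1)(n-k+2)}{2}. $$
Writing $k = 2j+1$ converts this into $\sum_{j=1}^{J} \frac{3j}{2}(n-2j)(n-2j+1)$, where $J = \lfloor \tfrac{n-1}{2}\rfloor$; expanding the summand as a cubic in $j$ and applying the three standard power-sum formulas collected above reduces everything to a polynomial in $n$. The one genuinely delicate point — and the step I expect to be the main obstacle — is the parity bookkeeping: the upper index is $J = \frac{n-1}{2}$ when $n$ is odd but $J = \frac{n-2}{2}$ when $n$ is even, so the two cases must be carried through separately and the resulting rational expressions factored into the two stated closed forms. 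Checking these forms against the known values $a_2(4)=9$, $a_2(3)=3$, and $a_2(5)=24$ gives a reliable verification that the algebra has been executed correctly.
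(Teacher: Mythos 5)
Your proposal is correct and follows essentially the same route as the paper: the identity $a_2(n)=\sum_k m(k)h(k)$ with $h(k)=\frac{(n-k+1)(n-k+2)}{2}$, the reindexing $k=2j+1$, and the parity split before applying the power-sum formulas are exactly the paper's argument. Your explicit justification that each rhombus has a unique minimal circumscribing copy of $T_k$ (so the sum neither misses nor double-counts) is a point the paper passes over more quickly, and is a welcome addition rather than a deviation.
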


\begin{proof}  Let $k \in \{3, \ldots, n\}$, and denote by $h(k)$ the number of times $T_k$ appears in $T_n$.  Determination of $h(k)$ is a relatively straightforward matter as $T_k$ appears once in $T_n$ with its bottom side on the $k^{th}$ row of $T_n$, twice with its bottom side on the $(k+1)^{th}$ row of $T_n$, and so on.  This gives us $h(k) = 1 + 2 + \ldots + n - k + 1 = \frac{(n-k+1)(n-k+2)}{2}$.

First, we consider the case when $n$ is odd.  From our previous discussion, we have $\ds a_2(n) = \sum_{k=3}^{n} h(k)m(k)$.  Since $m(k) = 0$ for all even $k$, we may set $k = 2j+1$, and then rewrite this sum as  $\ds a_2(n) = \sum_{j=1}^{\frac{n-1}{2}} h(2j+1)m(2j+1)$, which in turn can be rewritten as $\ds  a_2(n) = \frac32 \sum_{j=1}^{\frac{n-1}{2}} (n-2j)(n-2j+1)j$.  It is somewhat tedious to verify, but expanding $ (n-2j)(n-2j+1)j$ and then applying the above formulas for $\ds \sum_{j=1}^{\frac{n-1}{2}} j^i$ with $i \in \{1,2,3\}$ yields $a_2(n) = \frac{(n+3)(n+1)(n-1)^2}{32}$.  

Now let $n$ be even.  We have $\ds a_2(n) = \sum_{k=3}^{n} h(k)m(k)$, and just as before, $m(k) = 0$ for all even $k$, so $a_2(n) = h(3)m(3) + h(5)m(5) + \cdots + h(n-1)m(n-1)$.  Write $k = 2j+1$ and reindex the previous sum to obtain $\ds a_2(n) = \sum_{j=1}^{\frac{n-2}{2}} h(2j+1)m(2j+1)$, where the only difference from the odd $n$ case is the sum's upper bound for $j$.  Evaluating functions $h$ and $m$ along with the previously mentioned $\ds \sum_{j=1}^{\frac{n-2}{2}} j^i$ with $i \in \{1,2,3\}$ gives us $a_2(n) = \frac{n(n-2)(n+2)^2}{32}$.\qed   
\end{proof}

\begin{theorem} \label{paircounttheorempart2} If $n$ is odd, $a_1(n) = \frac{(n^2-1)(n^2 + 2n + 3)}{16}$.  If $n$ is even, $a_1(n) = \frac{n(n+2)(n^2 + 2)}{16}$. 
\end{theorem}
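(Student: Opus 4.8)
The plan is to avoid recomputing $a_1(n)$ from scratch and instead extract it from a single double-counting identity relating the pair-counts to the total number of equilateral triangles $\alpha(n) = \frac{n^4 + 2n^3 - n^2 - 2n}{24}$ recorded in the introduction. I would count incidences between the equilateral triangles of $T_n$ and their edges, where by an \emph{edge} I mean an unordered pair of vertices of a given triangle. Since every equilateral triangle contributes exactly three such edges, the number of (triangle, edge) incidences is $3\alpha(n)$. Counting the same quantity by pairs of points, a pair lying in exactly one equilateral triangle contributes $1$ incidence, a pair lying in exactly two contributes $2$, and a pair lying in none contributes $0$. This yields
\[ a_1(n) + 2a_2(n) = 3\alpha(n), \]
and hence $a_1(n) = 3\alpha(n) - 2a_2(n)$.

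From here the work is purely computational: substitute the formula for $\alpha(n)$ together with the expressions for $a_2(n)$ supplied by Theorem \ref{paircounttheorem}, treating the two parities separately. For odd $n$ I would compute $3\alpha(n) - 2a_2(n) = \frac{n^4 + 2n^3 - n^2 - 2n}{8} - \frac{(n-1)^2(n+1)(n+3)}{16}$, place both terms over the common denominator $16$, expand the numerator, and verify that it factors as $(n^2-1)(n^2 + 2n + 3)$. For even $n$ the same steps applied with $a_2(n) = \frac{n(n-2)(n+2)^2}{32}$ should collapse to $\frac{n(n+2)(n^2+2)}{16}$. As a sanity check, the case $n = 4$ gives $3\alpha(4) = 45$ and $2a_2(4) = 18$, so $a_1(4) = 27$, matching the warmup tally.

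The main obstacle is not conceptual but bookkeeping: the polynomial expansions in each parity must be carried out carefully so that the numerator factors cleanly into the claimed form, and one must be attentive to the differing denominators ($24$ for $\alpha$ and $32$ for $a_2$) when clearing fractions. A secondary point worth stating explicitly is why the incidence count equals exactly $3\alpha(n)$: even when two distinct equilateral triangles happen to share a common edge, each (triangle, edge) pair is a distinct incidence, so the count from the triangle side is unambiguous, while the contribution of a shared edge is correctly registered as $2$ on the pair side. Once this identity is in hand, no further appeal to the geometry of $R_0$ or to Lemma \ref{midpointtheorem} is needed, since all of that reasoning has already been absorbed into the formula for $a_2(n)$.
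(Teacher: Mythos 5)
Your proposal is correct and is essentially identical to the paper's own proof: both derive the double-counting identity $a_1(n) + 2a_2(n) = 3\alpha(n)$ and then substitute the formulas for $\alpha(n)$ and $a_2(n)$ from Theorem \ref{paircounttheorem}. The only difference is that you spell out the incidence count and the algebra in more detail than the paper does.
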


\begin{proof} As mentioned in Section 1, the total number of 3-element subsets of $T_n$ that constitute the vertices of an equilateral triangle is given by $\alpha(n) = \frac{n^4 + 2n^3 - n^2 - 2n}{24}$.  It then follows that $\frac{n^4 + 2n^3 - n^2 - 2n}{8} = a_1(n) + 2a_2(n)$.  An application of Theorem \ref{paircounttheorem} and a small amount of algebra yields the desired result.\qed
\end{proof}

\begin{theorem} \label{paircounttheorempart3} For all $n$, $a_0(n) = a_2(n)$. 
\end{theorem}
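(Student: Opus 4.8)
The plan is to sidestep the explicit, case-dependent formulas and instead play two global counts against each other. First I would record the obvious partition identity: every pair of points of $T_n$ is the vertex set of exactly $0$, $1$, or $2$ equilateral triangles, so the three classes tallied by $a_0$, $a_1$, $a_2$ partition all pairs of points. Writing $N = \frac{n(n+1)}{2}$ for the number of points of $T_n$, this gives
$$ a_0(n) + a_1(n) + a_2(n) = \binom{N}{2}. $$
Consequently, establishing $a_0(n) = a_2(n)$ is equivalent to establishing the single identity $\binom{N}{2} = a_1(n) + 2a_2(n)$, and I would take that reformulation as the target.

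Next I would get a handle on the quantity $a_1(n) + 2a_2(n)$ by double counting incidences $(P,\Delta)$, where $\Delta$ is an equilateral triangle in $T_n$ and $P$ is a pair of its vertices. Counting by triangles, each of the $\alpha(n)$ triangles contributes exactly three such incidences, for a total of $3\alpha(n)$. Counting instead by pairs, a pair lying in one triangle contributes $1$ and a pair lying in two triangles contributes $2$ (while pairs in no triangle contribute nothing), giving a total of $a_1(n) + 2a_2(n)$. Hence $a_1(n) + 2a_2(n) = 3\alpha(n)$. This is precisely the relation already extracted in the proof of Theorem \ref{paircounttheorempart2}, where it was written as $\frac{n^4 + 2n^3 - n^2 - 2n}{8} = a_1(n) + 2a_2(n)$, since $3\alpha(n) = \frac{n^4 + 2n^3 - n^2 - 2n}{8}$.

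It then remains only to verify the numerical identity $\binom{N}{2} = 3\alpha(n)$. Using $\binom{N}{2} = \frac{N(N-1)}{2}$ with $N = \frac{n(n+1)}{2}$ and factoring $N - 1 = \frac{(n+2)(n-1)}{2}$, a one-line computation gives $\binom{N}{2} = \frac{(n-1)n(n+1)(n+2)}{8}$; the same factorization of $3\alpha(n) = \frac{n^4 + 2n^3 - n^2 - 2n}{8}$ produces the identical product $\frac{(n-1)n(n+1)(n+2)}{8}$. Subtracting the incidence identity from the partition identity now yields $a_0(n) - a_2(n) = 0$, as claimed. The only substantive step is this factorization, which is routine and, pleasantly, avoids entirely the odd/even casework of Theorems \ref{paircounttheorem} and \ref{paircounttheorempart2}.

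The main conceptual obstacle is really just recognizing that the theorem is equivalent to a clean double-counting identity rather than a messy algebraic coincidence between the two case formulas. I briefly considered a more ambitious route — an explicit bijection between the pairs counted by $a_0(n)$ and those counted by $a_2(n)$, which the word ``interesting'' in the statement hints should exist — but constructing such a geometric map (and proving it well-defined on the boundary and corner configurations) looks considerably more delicate than the two-count argument, so I would present the counting proof as the main line and leave a structural bijection as a remark.
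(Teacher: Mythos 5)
Your proposal is correct and is essentially the paper's own argument: the paper likewise observes that $\binom{|T_n|}{2} = 3\alpha(n)$, so each pair lies in one equilateral triangle on average, and concludes $a_0(n) = a_2(n)$ by the same subtraction of the partition identity from the incidence count. Your write-up just makes the two identities and the factorization explicit where the paper leaves them implicit.
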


\begin{proof} In Section 1 (and in the proof of the previous theorem), the total number of 3-element subsets of $T_n$ that constitute the vertices of an equilateral triangle is given by $\alpha(n) = \frac{n^4 + 2n^3 - n^2 - 2n}{24}$.  Let $\gamma(n)$ denote the total number of pairs of points in $T_n$, and note that $\gamma(n) = {{|T_n|}\choose{2}} = \frac{n^4 + 2n^3 - n^2 - 2n}{8}$.  Since $\gamma(n) = 3\alpha(n)$, each pair of points of $T_n$ appears together, on average, as the vertices of one equilateral triangle.  It follows that $a_0(n) = a_2(n)$.\qed 
\end{proof}

As remarked at the beginning of this section, it was quite surprising to the authors that $a_0(n) = a_2(n)$ for all $n$.  It strikes us that there should be some bijective argument demonstrating that $a_0(n) = a_2(n)$, one that does not require having an explicit count of all equilateral triangles in $T_n$.  We end this section with that question.

\begin{question} \label{equalityproblem} How does one construct a bijection between the set of all pairs of points of $T_n$ counted by $a_0(n)$ and those counted by $a_2(n)$? 
\end{question}

\section{Further Work}   

One of the oldest constructions in combinatorics is the \textit{Steiner triple system}.  Given a set $\mathcal{X}$ of $v$ points, it consists of a collection of 3-element subsets of $\mathcal{X}$ with the property that each pair of points of $\mathcal{X}$ appears in exactly one of these subsets.  Although triple systems were named after Jakob Steiner, who investigated them in the 1850s, it was shown by Kirkman in 1847 that such a system exists if and only if $v \equiv 1 \text{ or } 3 \pmod 6$.  For a historical perspective, see \cite{lindner}.  

Widely studied in the literature is the extension of a Steiner triple system to the concept of a \textit{pairwise balanced design}, often denoted as $(v, k, \lambda)$ and consisting of $v$ points and a collection of $k$-element subsets, here with the stipulation that each pair of points appears in exactly $\lambda$ of those subsets.  Note that using this notation, the original formulation of a Steiner triple system would be expressed as $(v, 3, 1)$.  Pairwise balanced designs are extended further to the notion of \textit{generalized block designs}, but the fundamental property of this family of structures is that once a collection of subsets has been determined, it is required that all pairs of points appear in the same number of those subsets.

While investigating topics in the previous two sections, it occurred to the authors that the 3-element subsets of $T_n$corresponding to the vertices of equilateral triangles constitute a modified version of a triple system, one where the parameter $\lambda$ has a different interpretation.  We propose this as an avenue for future study.

\begin{question} \label{modifiedstsproblem} For which $n, r$ is it possible to construct a collection of 3-element subsets of $\mathcal{X} = \{1, \ldots, n\}$ such that among all pairs of points of $\mathcal{X}$, exactly $r$ of those pairs appear in none of the 3-element subsets, exactly $r$ pairs appear in two of those subsets, and all other pairs appear in exactly one subset?    
\end{question}  

Note that if $r = 0$, Question \ref{modifiedstsproblem} is just asking for a Steiner triple system.  If $n$ is a triangular number with $r = a_2(n)$ as defined in Section 3, our observations on equilateral triangles in $T_n$ guarantee that such a structure exists.  In general, though, we cannot find mention of this question in the literature, and it seems like a nice topic for future investigation.

%%%%%%%%%%%% References %%%%%%%%%%%%%%%%
%\pagebreak

\pagebreak

\end{document}